\journal{Applied Mathematics Letters}
\newtheorem{theorem}{Theorem}[section]
\newtheorem{remark}{Remark}[section]
\newtheorem{lemma}{Lemma}[section]
\numberwithin{equation}{section}
\begin{document}

\begin{frontmatter}

\title{Strong convergence of modified inertial Mann algorithms for nonexpansive mappings}

%% Group authors per affiliation:
%\author{Elsevier\fnref{myfootnote}}
%\address{Radarweg 29, Amsterdam}
%\fntext[myfootnote]{Since 1880.}

%% or include affiliations in footnotes:
\author[mymainaddress]{Bing Tan}
\ead{bingtan72@gmail.com}

\author[mymainaddress]{Zheng Zhou}
\ead{zhouzheng2272@163.com}

\author[mymainaddress]{Songxiao Li\corref{mycorrespondingauthor}}
\cortext[mycorrespondingauthor]{Corresponding author}
\ead{jyulsx@163.com}

\address[mymainaddress]{Institute of Fundamental and Frontier Sciences, University of Electronic Science and Technology of China, Chengdu 611731, China}
\begin{abstract}
In this paper, we introduce two new modified inertial Mann Halpern and viscosity algorithms for solving fixed point problems. We establish  strong convergence theorems under some suitable conditions. Finally, our algorithms are applied to split feasibility problem, convex feasibility problem and location theory. The algorithms and results presented in this paper can summarize and improve corresponding results previously known in this area.
\end{abstract}

\begin{keyword}
Halpern algorithm  \sep Viscosity algorithm  \sep Inertial algorithm \sep  Nonexpansive mapping  \sep  Strong convergence
\MSC[2010] 47H05 \sep   49J40 \sep 90C52
\end{keyword}

\end{frontmatter}

\section{Introduction-Preliminaries}
Let $ C $ be a nonempty closed convex subset of a real Hilbert space $ H $.   A mapping $T: C \rightarrow C$  is said to be nonexpansive if $\|T x-T y\| \leq\|x-y\|$ for all $x, y \in C$. The set of fixed points of a mapping $T: C \rightarrow C$  is defined by $\operatorname{ Fix }(T):=\{x \in C: T x=x\}$. For  any $ x \in H$, ${P}_{C} x$ denotes the  metric projection of $ H $ onto $ C $, such that $ {P}_{C}(x) {:=}\operatorname{argmin}_{y \in C}\|x-y\| $.

In this paper, we consider the following fixed point problem: find $x^{*}\in C$, such that $T\left(x^{*}\right)=x^{*}$, where $T: C \rightarrow C$  is nonexpansive with $\operatorname{ Fix }(T) \neq \emptyset$. Approximation of fixed point problems with nonexpansive mappings has various specific applications, because many problems can be considered as fixed point problems with nonexpansive mappings. For instance, monotone variational inequalities, convex optimization problems, convex feasibility problems  and image restoration problems.  It is known that the Picard iteration algorithm may not converge. One way to overcome this difficulty is to use Mann’s iteration algorithm that produces a sequence $\left\{x_{n}\right\}$ via the following:
\begin{equation}\label{mann}
	x_{n+1}=\psi_{n} x_{n}+\left(1-\psi_{n}\right) T x_{n}, \quad n \ge 0,
\end{equation}
the iterative sequence $\left\{x_{n}\right\}$ defined by \eqref{mann} converges weakly to a fixed point of $ T $ provided that $\left\{\psi_{n}\right\} \subset(0,1)$   satisfies  $\sum_{n=0}^{\infty} \psi_{n}\left(1-\psi_{n}\right)=+\infty$.

In practical applications, many problems, such as, quantum physics and image reconstruction, are in  infinite dimensional spaces. To investigate these problems, norm convergence is usually preferable to the weak convergence. Therefore, modifying the Mann iteration algorithm to obtain strong convergence has been studied by many authors, see \cite{nakajo2003strong,kim2005strong,kim2006strong,marino2007weak,yao2008strong,takahashi2008strong,qin2009strong} and  the references therein.  In 2003, Nakajo and Takahashi \cite{nakajo2003strong} established strong convergence of the Mann iteration with the aid of projections. Indeed, they considered the following algorithm:
\begin{equation}\label{cq}
	\left\{\begin{array}{l}{y_{n}=\psi_{n} x_{n}+\left(1-\psi_{n}\right) T x_{n}}, \\ {C_{n}=\left\{u \in C:\left\|y_{n}-u\right\| \le\left\|x_{n}-u\right\|\right\}}, \\ {Q_{n}=\left\{u \in C:\left\langle x_{n}-u, x_{n}-x_{0}\right\rangle \le 0\right\}}, \\ {x_{n+1}={P}_{C_{n} \cap Q_{n}} x_{0}, \quad n \ge 0},\end{array}\right.
\end{equation}
where $\left\{\psi_{n}\right\} \subset[0,1)$, $ T $ is a nonexpansive mapping on $C$ and  ${P}_{C_{n} \cap Q_{n}}$ is the  metric projection from $C$ onto $C_{n} \cap Q_{n}$. This method is now referred as  the CQ algorithm.  For further research, see \cite{takahashi2008strong,qin2009convergence,zhang2009self,qin2010hybrid,van2017modified}. Recently, Kim and Xu \cite{kim2005strong} proposed the following modified Mann iteration algorithm based on the  Halpern iterative algorithm \cite{halpern1967fixed} and the Mann iteration algorithm:
\begin{equation}\label{Kim}
	\left\{\begin{array}{ll}  {y_{n}=\psi_{n} x_{n}+\left(1-\psi_{n}\right) T x_{n},}  \\ {x_{n+1}=\nu_{n} u+\left(1-\nu_{n}\right) y_{n}}, n \ge 0,\end{array}\right.
\end{equation}
where $u \in C$ is an arbitrary (but fixed) element in $C$. They obtained a strong convergence theorem  of iteration algorithm \eqref{Kim} as follows:
\begin{theorem}
	Let $ C $ be a closed convex subset of a uniformly smooth Banach space $ X $ and let $T: C \rightarrow C$ be a nonexpansive mapping with  $\operatorname{Fix}(T) \neq \emptyset$. Given a point $u \in C$ and given sequences $\left\{\psi_{n}\right\}$ and $\left\{\nu_{n}\right\}$ in $(0,1)$,  the
	following conditions are satisfied:
	\begin{enumerate}[(C1)]
		\item $\lim_{n \rightarrow \infty}\psi_{n} = 0,\; \sum_{n=0}^{\infty} \psi_{n}=\infty$ and $\sum_{n=0}^{\infty}\left|\psi_{n+1}-\psi_{n}\right|<\infty$;
		\item $\lim_{n \rightarrow \infty}\nu_{n} = 0,\; \sum_{n=0}^{\infty} \nu_{n}=\infty$ and $\sum_{n=0}^{\infty}\left|\nu_{n+1}-\nu_{n}\right|<\infty$.
	\end{enumerate}
	Then the sequence $\left\{x_{n}\right\}$ defined by \eqref{Kim} converges strongly to a fixed point of $T$.
\end{theorem}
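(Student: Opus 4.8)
The plan is to follow the standard Halpern-type template for strong convergence, carried out in the uniformly smooth Banach space setting via the normalized duality mapping $J$. First I would fix a point $p \in \operatorname{Fix}(T)$ and establish boundedness of $\{x_n\}$. Nonexpansiveness gives $\|y_n - p\| \le \psi_n\|x_n - p\| + (1-\psi_n)\|Tx_n - p\| \le \|x_n - p\|$, and then $\|x_{n+1} - p\| \le \nu_n\|u - p\| + (1-\nu_n)\|x_n - p\| \le \max\{\|u-p\|,\|x_n - p\|\}$; an induction yields $\|x_n - p\| \le \max\{\|u-p\|,\|x_0 - p\|\}$, so $\{x_n\}$, and hence $\{y_n\}$ and $\{Tx_n\}$, are bounded.

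Next I would prove the asymptotic regularity $\|x_{n+1} - x_n\| \to 0$, which is where conditions (C1) and (C2) enter. Writing $y_n - y_{n-1} = \psi_n(x_n - x_{n-1}) + (1-\psi_n)(Tx_n - Tx_{n-1}) + (\psi_n - \psi_{n-1})(x_{n-1} - Tx_{n-1})$ and using nonexpansiveness gives $\|y_n - y_{n-1}\| \le \|x_n - x_{n-1}\| + M_1|\psi_n - \psi_{n-1}|$, with $M_1$ a bound for $\|x_{n-1} - Tx_{n-1}\|$. An analogous decomposition $x_{n+1} - x_n = (\nu_n - \nu_{n-1})(u - y_{n-1}) + (1-\nu_n)(y_n - y_{n-1})$ then produces $\|x_{n+1} - x_n\| \le (1-\nu_n)\|x_n - x_{n-1}\| + M_2|\nu_n - \nu_{n-1}| + M_1|\psi_n - \psi_{n-1}|$. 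Since $\sum \nu_n = \infty$ while the two difference sequences are summable, the standard recursion lemma (Xu's lemma) forces $\|x_{n+1} - x_n\| \to 0$. From $\|x_{n+1} - y_n\| = \nu_n\|u - y_n\| \to 0$ and $y_n - Tx_n = \psi_n(x_n - Tx_n)$, I would then derive $(1-\psi_n)\|x_n - Tx_n\| \le \|x_n - x_{n+1}\| + \nu_n\|u - y_n\|$ and conclude $\|x_n - Tx_n\| \to 0$, because $\psi_n, \nu_n \to 0$.

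The main obstacle is the variational inequality $\limsup_{n\to\infty} \langle u - p, J(x_n - p)\rangle \le 0$ for the correct limit point $p$. Here I would invoke Reich's theorem: in a uniformly smooth Banach space the net $z_t$ defined by $z_t = tu + (1-t)Tz_t$ converges strongly as $t \to 0^+$ to $p = Q(u)$, where $Q$ is the sunny nonexpansive retraction of $C$ onto $\operatorname{Fix}(T)$, and $p$ satisfies $\langle u - p, J(y - p)\rangle \le 0$ for all $y \in \operatorname{Fix}(T)$. Combining the defining identity for $z_t$ with the subdifferential inequality $\|a+b\|^2 \le \|a\|^2 + 2\langle b, J(a+b)\rangle$ and the regularity $\|x_n - Tx_n\| \to 0$ yields the desired $\limsup$. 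The norm-to-norm uniform continuity of $J$ on bounded sets (a consequence of uniform smoothness) is precisely what makes this step go through, and it also lets me freely replace $J(x_n - p)$ by $J(x_{n+1} - p)$ using asymptotic regularity.

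Finally, I would close the argument by applying the same subdifferential inequality to $x_{n+1} - p = \nu_n(u-p) + (1-\nu_n)(y_n - p)$, giving $\|x_{n+1} - p\|^2 \le (1-\nu_n)^2\|x_n - p\|^2 + 2\nu_n\langle u - p, J(x_{n+1} - p)\rangle$. Rewriting this as $s_{n+1} \le (1-\alpha_n)s_n + \alpha_n\beta_n$ with $s_n = \|x_n - p\|^2$, $\alpha_n = \nu_n(2-\nu_n)$ and $\beta_n = \tfrac{2}{2-\nu_n}\langle u - p, J(x_{n+1}-p)\rangle$, I observe that $\sum \alpha_n = \infty$ and $\limsup \beta_n \le 0$ by the previous step, so Xu's lemma yields $s_n \to 0$, that is, $x_n \to p$ strongly.
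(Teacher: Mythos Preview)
The paper does not actually prove this theorem. It is stated in the introduction as the result of Kim and Xu \cite{kim2005strong} and serves only as background motivation for the paper's new inertial algorithms; no proof is given in the manuscript. So there is no ``paper's own proof'' to compare against.

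That said, your sketch is correct and is essentially the original Kim--Xu argument: boundedness, then asymptotic regularity $\|x_{n+1}-x_n\|\to 0$ via the recursion using the summability of $|\psi_{n+1}-\psi_n|$ and $|\nu_{n+1}-\nu_n|$ together with $\sum\nu_n=\infty$, then $\|x_n-Tx_n\|\to 0$, then the $\limsup$ inequality through Reich's contraction path $z_t$ and the uniform smoothness of $X$ (giving norm-to-norm uniform continuity of $J$ on bounded sets), and finally Xu's lemma on $\|x_{n+1}-p\|^2$. All steps are standard and sound.

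It is worth noting, by way of contrast, that when the present paper proves its own main result (Theorem~2.1, the inertial Halpern--Mann scheme in Hilbert space), it does \emph{not} follow this template. Instead of first establishing $\|x_{n+1}-x_n\|\to 0$ directly, it uses the two-branch lemma (Lemma~1.2 here, from \cite{cholamjiak2018inertial}): one writes simultaneously $S_{n+1}\le(1-\nu_n)S_n+\nu_n\sigma_n$ and $S_{n+1}\le S_n-\eta_n+\pi_n$, and then verifies that $\eta_{n_k}\to 0$ forces $\limsup_k\sigma_{n_k}\le 0$ along any subsequence. This approach dispenses with the summability hypotheses on $|\psi_{n+1}-\psi_n|$ and $|\nu_{n+1}-\nu_n|$, which is precisely why conditions (D1)--(D2) in Theorem~2.1 are weaker than (C1)--(C2) above. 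Your argument, by contrast, genuinely needs those summability conditions for the asymptotic-regularity step, so it is the right proof for the theorem as stated but would not extend to the paper's setting.
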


Inspired by the result of  Kim and Xu \cite{kim2005strong}, Yao, Chen and Yao \cite{yao2008strong} introduced  a new modified Mann iteration algorithm by combines the viscosity approximation algorithm \cite{moudafi2000viscosity} and  the modified Mann iteration algorithm \cite{kim2005strong}. They  established  strong convergence in a uniformly smooth Banach space under some  fewer restrictions. It should be noted that there is no additional projection involved in \cite{kim2005strong} and \cite{yao2008strong}. For further research, see \cite{martinez2006strong,qin2007approximation,zhou2008convergence,ceng2008Mann,he2019optimal}.

In general, the convergence rate of Mann algorithm  is slow. Fast convergence of algorithm is required in many practical applications. In particular, an inertial type extrapolation was first proposed by Polyak \cite{polyak1964some} as an acceleration process. In recent years, some authors have constructed different fast iterative algorithms by inertial extrapolation techniques, such as, inertial Mann algorithms \cite{mainge2008convergence},  inertial forward-backward splitting algorithms \cite{lorenz2015inertial}, inertial extragradient algorithms \cite{liu2019strong,fan2019subgradient} and  fast iterative shrinkage-thresholding algorithm  (FISTA) \cite{beck2009fast}. In 2008, Mainge \cite{mainge2008convergence} introduced the following inertial Mann algorithm by unifying the inertial extrapolation and the Mann algorithm:
\begin{equation}\label{imann}
	\left\{\begin{array}{l}{w_{n}=x_{n}+\delta_{n}\left(x_{n}-x_{n-1}\right)}, \\ {x_{n+1}=\psi_{n} w_{n}+\left(1-\psi_{n}\right) T w_{n}},n\ge 0.\end{array}\right.
\end{equation}	
Then the iterative sequence $ \{x_n \} $ defined by \eqref{imann} converges weakly to a fixed point of $ T $ under some mild  assumptions.

Inspired and motivated by the works of Kim and Xu \cite{kim2005strong}, Yao, Chen and Yao \cite{yao2008strong} and Mainge \cite{mainge2008convergence},  we propose modified inertial Mann Halpern algorithm and modified inertial Mann viscosity algorithm, respectively. Strong convergence results are obtain under some mild conditions. Finally, our algorithms are applied to split feasibility problems, convex feasibility problems and location theory. Our algorithms and results  generalize and improve some corresponding previously known results.

Throughout this paper, we denote the strong and weak convergence of a sequence $\left\{x_{n}\right\}$ to a point $x \in H$ by $x_{n} \rightarrow x$ and $x_{n} \rightharpoonup x$, respectively. For each $x,y \in H$, we have the following facts.
\begin{enumerate}[(1)]
	\item $\|x+y\|^{2} \leq\|x\|^{2}+2\langle y, x+y\rangle $;
	\item $\|t x+(1-t) y\|^{2}=t\|x\|^{2}+(1-t)\|y\|^{2}-t(1-t)\|x-y\|^{2} , \quad \forall t\in\mathbb{R} $;
	\item $\langle {P}_{C} x-x, {P}_{C} x-y\rangle \le 0, \quad \forall y \in C $.
\end{enumerate}

\begin{lemma}\cite{bauschke2011convex}\label{lem:fix}
	Let $ C $ be a nonempty closed convex subset of a real Hilbert space  $ H $, $T: C \rightarrow {H}$  be a nonexpansive mapping. Let $\left\{x_{n}\right\}$ be a sequence
	in $ C $ and $ x\in H $ such that $x_{n} \rightharpoonup x$ and $T x_{n}-x_{n} \rightarrow 0$ as $n \rightarrow+\infty$. Then $x \in \operatorname{Fix}(T)$.
\end{lemma}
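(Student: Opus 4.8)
The statement is the classical demiclosedness principle for nonexpansive mappings, so my plan is to give a self-contained argument exploiting the Hilbert space structure together with the nonexpansiveness of $T$, rather than merely citing \cite{bauschke2011convex}. Writing $z := Tx$, the goal is to show $z = x$, which is equivalent to $x \in \operatorname{Fix}(T)$. First I would record that a weakly convergent sequence is bounded, so $\|x_n - x\|$ stays bounded, and that $Tx_n - x_n \to 0$ by hypothesis.

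The core of the argument is to compare $\|x_n - z\|^2$ with $\|x_n - x\|^2$. On the one hand, decomposing $x_n - z = (x_n - Tx_n) + (Tx_n - Tx)$ and expanding the square, the term $\|x_n - Tx_n\|^2$ vanishes in the limit, while the cross term $\langle x_n - Tx_n,\, Tx_n - Tx\rangle$ is bounded by $\|x_n - Tx_n\|\,\|Tx_n - Tx\|$; invoking nonexpansiveness in the form $\|Tx_n - Tx\| \le \|x_n - x\|$ together with boundedness, this cross term also vanishes. Hence $\|x_n - z\|^2 \le \|x_n - x\|^2 + o(1)$. On the other hand, the elementary identity
\[
	\|x_n - z\|^2 = \|x_n - x\|^2 + 2\langle x_n - x,\, x - z\rangle + \|x - z\|^2
\]
holds. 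Combining the two relations, the terms $\|x_n - x\|^2$ cancel and I am left with $2\langle x_n - x,\, x - z\rangle + \|x - z\|^2 \le o(1)$. Since $x_n \rightharpoonup x$, the inner product $\langle x_n - x,\, x - z\rangle \to 0$, and passing to the limit yields $\|x - z\|^2 \le 0$. Therefore $z = x$, i.e.\ $Tx = x$, which completes the proof.

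The step I expect to require the most care is the estimate of the cross term $\langle x_n - Tx_n,\, Tx_n - Tx\rangle$: one must simultaneously use $Tx_n - x_n \to 0$, the nonexpansiveness bound $\|Tx_n - Tx\| \le \|x_n - x\|$, and the boundedness of $\{x_n\}$ in order to conclude that it is $o(1)$. An alternative route would appeal to Opial's property of Hilbert spaces, arguing by contradiction that if $Tx \neq x$ then $\liminf_n \|x_n - x\| < \liminf_n \|x_n - Tx\| \le \liminf_n \|x_n - x\|$, a contradiction; but the direct computation above is arguably cleaner and avoids quoting Opial as a black box.
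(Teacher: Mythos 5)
Your argument is essentially correct, but note first that the paper contains no proof of this lemma to compare against: it is stated with a citation to Bauschke--Combettes, where it is the classical demiclosedness principle for nonexpansive mappings. Your direct computation is the standard self-contained proof of that principle, and every analytic step checks out: the expansion of $\|x_n - Tx\|^2$ along the decomposition $x_n - Tx = (x_n - Tx_n) + (Tx_n - Tx)$, the estimate of the cross term using boundedness of the weakly convergent sequence together with $\|Tx_n - Tx\| \le \|x_n - x\|$, the comparison with the identity $\|x_n - Tx\|^2 = \|x_n - x\|^2 + 2\langle x_n - x,\, x - Tx\rangle + \|x - Tx\|^2$, and the passage to the limit via $\langle x_n - x,\, x - Tx\rangle \to 0$, yielding $\|x - Tx\|^2 \le 0$.

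There is, however, one genuine (if easily repaired) omission. You define $z := Tx$ in the first line, but $T$ is only defined on $C$, while the hypothesis places the weak limit $x$ merely in $H$. The conclusion $x \in \operatorname{Fix}(T)$ implicitly asserts $x \in C$, and your proof needs $x \in C$ before $Tx$ is even meaningful. This is exactly where the standing assumptions that $C$ is closed \emph{and convex} enter: a closed convex subset of a Hilbert space is weakly closed (Mazur's theorem), so $x_n \rightharpoonup x$ with $\{x_n\} \subset C$ forces $x \in C$. Without this observation the convexity hypothesis is never used and the expression $Tx$ is undefined; with it, the rest of your computation goes through verbatim. Your closing remark about Opial's property is also accurate --- that contradiction argument is the other standard route --- but it would require the same preliminary step.
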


\begin{lemma}\cite{cholamjiak2018inertial}\label{lemma1}
	Assume $\left\{S_{n}\right\}$  is a sequence of nonnegative real numbers such
	that
	\[
	\begin{aligned}
	S_{n+1} \leq \left(1-\nu_{n}\right) S_{n}+\nu_{n}  \sigma_{n}, \forall n\ge 0,\; \text{and} \;  S_{n+1} \leq S_{n}-\eta_{n}+\pi_{n}, \forall n\ge 0,
	\end{aligned}
	\]
	where $\left\{\nu_{n}\right\}$  is a sequence in $ (0,1) $, $\left\{\eta_{n}\right\}$  is a sequence of nonnegative real numbers, $\left\{\sigma_{n}\right\}$ and $\left\{\pi_{n}\right\}$  are real sequences such that (i) $\sum_{n=0}^{\infty} \nu_{n}=\infty$; (ii) $\lim _{n \rightarrow \infty} \pi_{n}=0$; (iii)  $\lim _{k \rightarrow \infty} \eta_{n_{k}}=0$  implies $\limsup _{k \rightarrow \infty} \sigma_{n_{k}} \leq 0$  for any subsequence $\left\{\eta_{n_{k}}\right\}$ of $\left\{\eta_{n}\right\}$. Then $\lim _{n \rightarrow \infty} S_{n}=0$.
\end{lemma}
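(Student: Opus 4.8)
The plan is to prove $\lim_{n\to\infty}S_n=0$ by establishing $\limsup_{n\to\infty}S_n\le 0$; since each $S_n\ge 0$, the two bounds together force the limit to be $0$. The two hypotheses will play distinct roles: the recursion $S_{n+1}\le(1-\nu_n)S_n+\nu_n\sigma_n$ is the contraction that drives the convergence through condition (i), while the companion estimate $S_{n+1}\le S_n-\eta_n+\pi_n$, together with (ii), will be used to force $\eta_n\to 0$ along an appropriate subsequence, so that (iii) can be invoked to produce $\limsup\sigma\le 0$ on that subsequence. I would split the argument according to whether $\{S_n\}$ is eventually nonincreasing.

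First I would handle the case in which $\{S_n\}$ is eventually nonincreasing. Being bounded below by $0$, it then converges to some $\ell\ge 0$, so $S_n-S_{n+1}\to 0$. From the second inequality, $0\le\eta_n\le S_n-S_{n+1}+\pi_n$, and using (ii) this yields $\eta_n\to 0$; condition (iii) applied to the full sequence gives $\limsup_n\sigma_n\le 0$. Rewriting the recursion as $\nu_n(S_n-\sigma_n)\le S_n-S_{n+1}$ and summing over a tail, if $\ell>0$ then $S_n-\sigma_n$ is bounded below by a positive constant for all large $n$, so the left-hand side diverges because $\sum\nu_n=\infty$, whereas the right-hand side telescopes and stays bounded by $S_N$. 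This contradiction forces $\ell=0$.

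The harder case, and the main obstacle, is when $\{S_n\}$ is \emph{not} eventually nonincreasing, since the lack of monotonicity blocks a direct Xu-type argument on the recursion. Here I would use Maingé's index construction: for all large $n$ set $\tau(n)=\max\{k\le n:\ S_k\le S_{k+1}\}$, which is well defined, nondecreasing, tends to infinity, and satisfies both $S_{\tau(n)}\le S_{\tau(n)+1}$ and $S_n\le S_{\tau(n)+1}$. Along the indices $\tau(n)$ the increments $S_{\tau(n)+1}-S_{\tau(n)}$ are nonnegative, so the second inequality gives $0\le\eta_{\tau(n)}\le S_{\tau(n)}-S_{\tau(n)+1}+\pi_{\tau(n)}\le\pi_{\tau(n)}\to 0$, hence $\eta_{\tau(n)}\to 0$. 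Condition (iii), applied to the subsequence $\{\tau(n)\}$, then delivers $\limsup_n\sigma_{\tau(n)}\le 0$.

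Finally I would close the loop. Inserting $S_{\tau(n)}\le S_{\tau(n)+1}$ into the recursion at index $\tau(n)$ gives $\nu_{\tau(n)}S_{\tau(n)}\le\nu_{\tau(n)}\sigma_{\tau(n)}$, and dividing by $\nu_{\tau(n)}>0$ yields $S_{\tau(n)}\le\sigma_{\tau(n)}$, whence also $S_{\tau(n)+1}\le\sigma_{\tau(n)}$. Combining with $S_n\le S_{\tau(n)+1}$ produces $S_n\le\sigma_{\tau(n)}$, so $\limsup_n S_n\le\limsup_n\sigma_{\tau(n)}\le 0$, which completes the proof. The delicate point throughout is the construction of $\tau(n)$ and the verification of its monotonicity and covering properties, as everything in the non-monotone case hinges on selecting a subsequence along which the increments are nonnegative so that (iii) becomes applicable.
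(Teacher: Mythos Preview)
Your argument is correct and follows the standard two--case split (eventually nonincreasing versus not) combined with Maing\'e's index $\tau(n)$; the details in both cases are sound, including the step $S_{\tau(n)+1}\le(1-\nu_{\tau(n)})S_{\tau(n)}+\nu_{\tau(n)}\sigma_{\tau(n)}\le\sigma_{\tau(n)}$ once $S_{\tau(n)}\le\sigma_{\tau(n)}$ has been established.

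As for comparison with the paper: there is nothing to compare against. The paper does not prove this lemma at all; it is stated as a preliminary result and attributed to \cite{cholamjiak2018inertial}, with no proof given. So your write-up actually supplies what the paper omits. One minor remark: in your Case~1 you can shorten the final step by invoking Xu's lemma directly (since you have already obtained $\limsup_n\sigma_n\le0$, $\sum\nu_n=\infty$, and the contraction recursion), rather than reproving it via the telescoping argument; but your telescoping argument is fine and self-contained.
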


\section{Modified inertial Mann Halpern and viscosity algorithms}\label{sec3}
In this section, combining the idea of inertial with the Halpern algorithm and viscosity algorithm, respectively, we introduce two modified inertial Mann algorithms and  analyzes their convergence.
\begin{theorem}
	Let $ C $ be a nonempty closed convex subset of a real Hilbert space $ H $ and let $T: C \rightarrow C$  be a nonexpansive mapping with  $\operatorname{ Fix }(T) \neq \emptyset$.  Given a point  $u \in C$ and given two sequences $\left\{\psi_{n}\right\}$ and $\left\{\nu_{n}\right\}$ in $(0,1)$, the following conditions are satisfied:
	\begin{enumerate}[(D1)]
		\item $\lim_{n \rightarrow \infty}\nu_{n} = 0$ and  $\sum_{n=0}^{\infty} \nu_{n}=\infty$;
		\item $ \lim_{n \rightarrow\infty} \frac{\delta_{n}}{\nu_{n}}\left\|x_{n}-x_{n-1}\right\|=0 $.
	\end{enumerate}
	Set $ x_{-1},x_{0}\in C $ be arbitarily. Define a sequence $\left\{x_{n}\right\}$  by the following algorithm:
	\begin{equation}\label{MIMHA}
		\left\{\begin{array}{ll} w_{n}=x_{n} + \delta_{n}(x_{n}-x_{n-1}), \\ {y_{n}=\psi_{n} w_{n}+\left(1-\psi_{n}\right) T w_{n},}  \\ {x_{n+1}=\nu_{n} u+\left(1-\nu_{n}\right) y_{n}}, n \ge 0.\end{array}\right.
	\end{equation}
	Then the iterative sequence $\left\{x_{n}\right\}$ defined by \eqref{MIMHA} converges strongly to  $p=P_{\operatorname{Fix}(T)} u$.
\end{theorem}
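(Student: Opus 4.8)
The plan is to apply Lemma~\ref{lemma1} with $S_n=\|x_n-p\|^2$, where $p=P_{\operatorname{Fix}(T)}u$. First I would unpack (D2): writing $\delta_n\|x_n-x_{n-1}\|=\nu_n b_n$ with $b_n=\frac{\delta_n}{\nu_n}\|x_n-x_{n-1}\|$, condition (D2) gives $b_n\to0$, so $\{b_n\}$ is bounded and $\delta_n\|x_n-x_{n-1}\|\to0$. Using nonexpansiveness of $T$ together with $Tp=p$ gives $\|Tw_n-p\|\le\|w_n-p\|$, hence $\|y_n-p\|\le\|w_n-p\|\le\|x_n-p\|+\delta_n\|x_n-x_{n-1}\|$, and therefore
\begin{equation*}
\|x_{n+1}-p\|\le(1-\nu_n)\|x_n-p\|+\nu_n\big(\|u-p\|+\sup_k b_k\big).
\end{equation*}
A routine induction then shows $\{x_n\}$ is bounded, and consequently so are $\{w_n\}$, $\{y_n\}$ and $\{Tw_n\}$.

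Next I would produce the two estimates required by Lemma~\ref{lemma1}. Applying the identity $\|tx+(1-t)y\|^2=t\|x\|^2+(1-t)\|y\|^2-t(1-t)\|x-y\|^2$ to $y_n$ yields $\|y_n-p\|^2\le\|w_n-p\|^2-\psi_n(1-\psi_n)\|w_n-Tw_n\|^2$, while expanding $\|w_n-p\|^2$ gives $\|w_n-p\|^2\le\|x_n-p\|^2+\nu_n\varepsilon_n$ with $\varepsilon_n=b_n\big(2\|x_n-p\|+\nu_n b_n\big)\to0$. Combining these with the inequality $\|a+b\|^2\le\|a\|^2+2\langle b,a+b\rangle$ applied to $x_{n+1}-p=(1-\nu_n)(y_n-p)+\nu_n(u-p)$ produces
\begin{equation*}
S_{n+1}\le(1-\nu_n)^2\Big(S_n+\nu_n\varepsilon_n-\psi_n(1-\psi_n)\|w_n-Tw_n\|^2\Big)+2\nu_n\langle u-p,x_{n+1}-p\rangle.
\end{equation*}
Setting $\eta_n=(1-\nu_n)^2\psi_n(1-\psi_n)\|w_n-Tw_n\|^2\ge0$ and using $(1-\nu_n)^2\le1-\nu_n$, I read off $S_{n+1}\le(1-\nu_n)S_n+\nu_n\sigma_n$ with $\sigma_n=(1-\nu_n)^2\varepsilon_n+2\langle u-p,x_{n+1}-p\rangle$, and $S_{n+1}\le S_n-\eta_n+\pi_n$ with $\pi_n=(1-\nu_n)^2\nu_n\varepsilon_n+2\nu_n\langle u-p,x_{n+1}-p\rangle$. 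Since $\nu_n\to0$, $\varepsilon_n\to0$ and $\{x_n\}$ is bounded, $\pi_n\to0$, so hypotheses (i) and (ii) of Lemma~\ref{lemma1} hold by (D1).

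The crux is verifying hypothesis (iii): along any subsequence with $\eta_{n_k}\to0$ I must show $\limsup_k\sigma_{n_k}\le0$. Because $\nu_n\to0$ forces $(1-\nu_n)^2\to1$, and provided $\liminf_n\psi_n(1-\psi_n)>0$ (which I take from the standing hypotheses on $\{\psi_n\}$), $\eta_{n_k}\to0$ gives $\|w_{n_k}-Tw_{n_k}\|\to0$. The identities $y_n-w_n=(1-\psi_n)(Tw_n-w_n)$ and $x_{n+1}-y_n=\nu_n(u-y_n)$ then force $\|x_{n_k+1}-w_{n_k}\|\to0$. Passing to a further subsequence that both attains $\limsup_k\langle u-p,x_{n_k+1}-p\rangle$ and satisfies $w_{n_k}\rightharpoonup z$, the demiclosedness principle (Lemma~\ref{lem:fix}) yields $z\in\operatorname{Fix}(T)$, whence $x_{n_k+1}\rightharpoonup z$ as well. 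The projection inequality $\langle P_C x-x,P_C x-y\rangle\le0$ with $C=\operatorname{Fix}(T)$, $x=u$ and $y=z$ gives $\langle u-p,z-p\rangle\le0$, so $\limsup_k\langle u-p,x_{n_k+1}-p\rangle=\langle u-p,z-p\rangle\le0$; since $\varepsilon_n\to0$, this forces $\limsup_k\sigma_{n_k}\le0$. Lemma~\ref{lemma1} then gives $S_n\to0$, i.e.\ $x_n\to p$. The main difficulty lies precisely in this final paragraph: ensuring the inertial and Halpern increments vanish so that $w_{n_k}$, $y_{n_k}$ and $x_{n_k+1}$ share a common weak limit, and then correctly chaining that limit through the demiclosedness principle into the projection characterization of $p$.
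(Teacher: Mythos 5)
Your proposal is correct and takes essentially the same route as the paper's own proof: the same induction for boundedness, the same decomposition $S_n=\|x_n-p\|^2$, $\eta_n\propto\psi_n(1-\psi_n)\|w_n-Tw_n\|^2$, $\pi_n=\nu_n\sigma_n$ feeding Lemma~\ref{lemma1}, and the same demiclosedness-plus-projection argument (Lemma~\ref{lem:fix} and the inequality $\langle P_Cx-x,P_Cx-y\rangle\le0$) to verify hypothesis (iii). The condition $\liminf_n\psi_n(1-\psi_n)>0$ that you invoke is genuinely needed but is \emph{not} among the theorem's stated hypotheses; the paper's proof silently makes the same assumption when it deduces $\|Tw_{n_k}-w_{n_k}\|\to0$ from $\eta_{n_k}\to0$ (and its own experiments even use $\psi_n\to0$), so your flagging it explicitly is an improvement rather than a defect.
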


\begin{proof}
	First we show that $\left\{x_{n}\right\}$ is bounded. Indeed, taking $p \in \operatorname{Fix}(T) $, we have
	\begin{equation}\label{eqe}
		\begin{aligned}\left\|x_{n+1}-p\right\| & \leq \nu_{n}\|u-p\|+\left(1-\nu_{n}\right)\left\|y_{n}-p\right\| \\ & \le \nu_{n}\|u-p\| + \left(1-\nu_{n}\right)\left(\psi_{n}\|w_{n}-p\| + (1-\psi_{n})\|Tw_{n}-p\|\right)  \\ & \le  \left(1-\nu_{n}\right)\left\|x_{n}-p\right\|+\nu_{n}\left\|u-p\right\|+\left(1-\nu_{n}\right)\delta_{n}\left\|x_{n}-x_{n-1}\right\|. \end{aligned}
	\end{equation}
	Let $ M:=2\max \left\{\|u-p\|, \sup _{n \ge 0}\frac{\left(1-\nu_{n}\right)\delta_{n}}{\nu_{n}}\left\|x_{n}-x_{n-1}\right\|\right\} $. Then \eqref{eqe} reducing to the following:
	\begin{equation}\label{eqdc}
		\left\|x_{n+1}-p\right\| \leq \left(1-\nu_{n}\right)\left\|x_{n}-p\right\| + \nu_{n} M \le \max\{\|x_{n}-p\|,M\}\le \cdots \le \max\{\|x_{0}-p\|,M\}.
	\end{equation}
	Combining condition (D2) and \eqref{eqdc}, we obtain that $\left\{x_{n}\right\}$ is bounded. So $ \{w_{n}\} $ and $\left\{y_{n}\right\}$ are also bounded. By the definition of $ y_{n}  $ in \eqref{MIMHA}, we have	
	\begin{equation}\label{eqdf}
		\begin{aligned}  \|y_{n}-p\|^{2} &= \psi_{n}\|w_{n}-p\|^{2}+\left(1-\psi_{n}\right)\| T w_{n}- p\left\|^{2}-\psi_{n}\left(1-\psi_{n}\right)\right\| T w_{n}-w_{n} \|^2 \\ &\le \|w_{n}-p\|^{2}-\psi_{n}\left(1-\psi_{n}\right)\| T w_{n}-w_{n} \|^{2}. \end{aligned}
	\end{equation}
	Therefore, from the definition of $ w_{n} $ and \eqref{eqdf}, we get
	\begin{equation}\label{eqec}
		\begin{aligned}
			\|x_{n+1}-p\|^2
			& = \|(1-\nu_{n})(y_{n}-p) + \nu_{n}(u-p)\|\\
			&\le \left(1-\nu_{n}\right)^2\left\|y_{n}-p\right\|^{2} + 2 \nu_{n}\langle u-p,  x_{n+1}-p\rangle \\
			&\le \left(1-\nu_{n}\right)\|w_{n}-p\|^{2}-\psi_{n}\left(1-\psi_{n}\right)\left(1-\nu_{n}\right)\| T w_{n}-w_{n} \|^{2}+ 2 \nu_{n}\langle u-p,  x_{n+1}-p\rangle \\
			&= \left(1-\nu_{n}\right)\left\|x_{n}-p\right\|^{2}+\delta_{n}^{2}\left(1-\nu_{n}\right)\left\|x_{n}-x_{n-1}\right\|^{2}+2 \delta_{n}\left(1-\nu_{n}\right)\langle x_{n}-x_{n-1},  x_{n}-p \rangle\\
			&\quad -\psi_{n}\left(1-\psi_{n}\right)\left(1-\nu_{n}\right)\| T w_{n}-w_{n} \|^{2}+ 2 \nu_{n}\langle u-p,  x_{n+1}-p\rangle. \\
		\end{aligned}
	\end{equation}
	For each $ n\ge 0 $, let
	\[
	\begin{aligned}
	S_n &= \left\|x_{n}-p\right\|^{2}, \;\pi_{n} = \nu_{n}\sigma_{n},\\
	\sigma_{n} &= \frac{\delta_{n}^{2}\left(1-\nu_{n}\right)}{\nu_{n}}\left\|x_{n}-x_{n-1}\right\|^{2}+\frac{2 \delta_{n}\left(1-\nu_{n}\right)}{\nu_{n}}\langle x_{n}-x_{n-1},  x_{n}-p \rangle + 2 \langle u-p,  x_{n+1}-p\rangle, \\
	\eta_{n} &= \psi_{n}\left(1-\psi_{n}\right)\left(1-\nu_{n}\right)\| T w_{n}-w_{n} \|^{2}.
	\end{aligned}
	\]
	Then \eqref{eqec} reduced to the following:
	\[
	\begin{aligned}
	S_{n+1} \leq \left(1-\nu_{n}\right) S_{n}+\nu_{n}  \sigma_{n}, \quad \text{and} \quad S_{n+1} \leq S_{n}-\eta_{n}+\pi_{n}.
	\end{aligned}
	\]
	From conditions (D1) and (D2), we obtain $ \sum_{n=0}^{\infty} \nu_{n} = \infty $ and $ \lim_{n \rightarrow\infty} \pi_{n} =0 $. In order to complete the proof, using Lemma \ref{lemma1}, it remains to show that $\lim _{k \rightarrow \infty} \eta_{n_{k}}=0$ implies $\limsup _{k \rightarrow \infty} \sigma_{n_{k}} \leq 0$  for any subsequence  $\left\{\eta_{n_{k}}\right\}$ of $\left\{\eta_{n}\right\}$. Let $\left\{\eta_{n_{k}}\right\}$ be a subsequence of $\left\{\eta_{n}\right\}$ such that $\lim _{k \rightarrow \infty} \eta_{n_{k}}=0$, which implies that $ \lim_{k \rightarrow \infty} \|Tw_{n_{k}} -w_{n_{k}}\| =0 $. From condition (D2), we have
	\begin{equation}\label{eq2}
		\left\|w_{n_{k}}-x_{n_{k}}\right\|=\delta_{n_{k}}\left\|x_{n_{k}}-x_{n_{k-1}}\right\| \rightarrow 0.
	\end{equation}
	Since $\{x_{n_{k}}\}$ is bounded,  there  exists a subsequence  $\{x_{n_{k_{j}}}\}$ of $\left\{x_{n_{k}}\right\}$ such that $x_{n_{k_{j}}} \rightharpoonup \bar{x}$ and
	$ \lim_{k \rightarrow \infty} \sup \left\langle u-p, x_{n_{k}}-p\right\rangle = \lim_{j \rightarrow \infty}\langle u-p, x_{n_{k_{j}}}-p\rangle $. By \eqref{eq2}, we have $w_{n_{k_{j}}} \rightharpoonup \bar{x}$. Using Lemma \ref{lem:fix}, we get $\bar{x} \in \operatorname{Fix}(T)$. Combining the projection property and $p=P_{\operatorname{Fix}(T)} u$, we obtain
	\begin{equation}\label{eqq}
		\lim_{k \rightarrow \infty} \sup \left\langle u-p, x_{n_{k}}-p\right\rangle = \lim_{j \rightarrow \infty}\langle u-p, x_{n_{k_{j}}}-p\rangle = \langle u-p, \bar{x}-p\rangle \le 0.
	\end{equation}
	From \eqref{MIMHA}, we obtain $ \|y_{n_{k}} -w_{n_{k}}\| = (1-\psi_{n_{k}})\|Tw_{n_{k}} -w_{n_{k}}\| \rightarrow 0 $. This together with \eqref{eq2}, we get $ \|y_{n_{k}}-x_{n_{k}}\| \le \|y_{n_{k}}-w_{n_{k}}\| + \|w_{n_{k}}-x_{n_{k}}\| \rightarrow 0 $. Further, combining condition (D1), we obtain
	\begin{equation}\label{eqd}
		\left\|x_{n_{k+1}}-x_{n_{k}}\right\| \le \nu_{n_{k}}\left\|u-x_{n_{k}}\right\|+\left(1-\nu_{n_{k}}\right)\left\|y_{n_{k}}-x_{n_{k}}\right\| \rightarrow 0.
	\end{equation}
	Combining \eqref{eqq} and \eqref{eqd}, we get that $ \limsup_{k \rightarrow \infty}\langle u-p,x_{n_{k+1}} -p\rangle \le 0 $, this together with condition (D2) implies that $\limsup_{k \rightarrow \infty} \sigma_{n_{k}} \leq 0$. From  Lemma \ref{lemma1} we observe that $\lim _{n \rightarrow \infty} S_{n}=0$ and hence $x_{n} \rightarrow p$ as $n \rightarrow \infty$. This completes the proof.
\end{proof}

\begin{remark}
	If $f: C \rightarrow C$ is a contractive mapping and we replace $ u $ by $ f(x_{n}) $ in \eqref{MIMHA}, we can obtain the following  viscosity iteration algorithm, for more details, see \cite{suzuki2007moudafi}.
\end{remark}

\begin{theorem}
	Let $ C $ be a nonempty closed convex subset of a real Hilbert space $ H $ and let $T: C \rightarrow C$  be a nonexpansive mapping with  $\operatorname{ Fix }(T) \neq \emptyset$. Let $f: C \rightarrow C$ be a $\rho$-contraction with $\rho \in[0,1)$, that is $ \|f(x)-f(y)\|\le \rho \|x-y\|, \forall x,y\in C $. Given two sequences $\left\{\psi_{n}\right\}$ and $\left\{\nu_{n}\right\}$ in $(0,1)$, the following conditions are satisfied:
	\begin{enumerate}[(K1)]
		\item $\lim_{n \rightarrow \infty}\nu_{n} = 0$ and  $\sum_{n=0}^{\infty} \nu_{n}=\infty$;
		\item $ \lim_{n \rightarrow\infty} \frac{\delta_{n}}{\nu_{n}}\left\|x_{n}-x_{n-1}\right\|=0 $.
	\end{enumerate}
	Set $ x_{-1},x_{0}\in C $ be arbitarily. Define a sequence $\left\{x_{n}\right\}$ by the following algorithm:
	\begin{equation}\label{MIMVA}
		\left\{\begin{array}{ll} w_{n}=x_{n} + \delta_{n}(x_{n}-x_{n-1}), \\ {y_{n}=\psi_{n} w_{n}+\left(1-\psi_{n}\right) T w_{n},}  \\ {x_{n+1}=\nu_{n} f(x_{n})+\left(1-\nu_{n}\right) y_{n}}, n \ge 0.\end{array}\right.
	\end{equation}
	Then the iterative sequence $\left\{x_{n}\right\}$ defined by \eqref{MIMVA} converges strongly to  $ z=P_{\operatorname{Fix}(T)} f(z) $.
\end{theorem}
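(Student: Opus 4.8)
The plan is to mirror the proof of Theorem 2.1 (the Halpern case) almost verbatim, replacing the constant anchor $u$ by $f(x_n)$ and the target $p=P_{\operatorname{Fix}(T)}u$ by $z=P_{\operatorname{Fix}(T)}f(z)$. First I would record that $z$ is well defined and unique: since $P_{\operatorname{Fix}(T)}$ is nonexpansive and $f$ is a $\rho$-contraction, the composition $P_{\operatorname{Fix}(T)}\circ f:C\to C$ is again a $\rho$-contraction, so by the Banach contraction principle it has a unique fixed point $z\in\operatorname{Fix}(T)$ characterized by $z=P_{\operatorname{Fix}(T)}f(z)$.

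Next I would establish boundedness of $\{x_n\}$. Starting from the definition of $x_{n+1}$ in \eqref{MIMVA}, I use the nonexpansiveness of $T$ together with $\|f(x_n)-z\|\le\|f(x_n)-f(z)\|+\|f(z)-z\|\le\rho\|x_n-z\|+\|f(z)-z\|$ and the inertial estimate $\|w_n-z\|\le\|x_n-z\|+\delta_n\|x_n-x_{n-1}\|$. This yields $\|x_{n+1}-z\|\le\bigl(1-(1-\rho)\nu_n\bigr)\|x_n-z\|+(1-\rho)\nu_n M$ for a suitable constant $M$ built from $\|f(z)-z\|$ and $\sup_n (1-\nu_n)\delta_n\|x_n-x_{n-1}\|/\nu_n$, which is finite by (K2). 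An induction as in \eqref{eqdc} then gives boundedness of $\{x_n\}$, hence of $\{w_n\}$, $\{y_n\}$ and $\{f(x_n)\}$.

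The core of the argument reproduces \eqref{eqdf}--\eqref{eqec} with $p$ replaced by $z$, yielding
\[
\|x_{n+1}-z\|^2\le(1-\nu_n)\|x_n-z\|^2+A_n-\eta_n+2\nu_n\langle f(x_n)-z,\,x_{n+1}-z\rangle,
\]
where $A_n$ collects the inertial terms and $\eta_n=\psi_n(1-\psi_n)(1-\nu_n)\|Tw_n-w_n\|^2$. The \textbf{main obstacle}, and the only genuine departure from Theorem 2.1, is the cross term $2\nu_n\langle f(x_n)-z,x_{n+1}-z\rangle$, which is no longer governed by a fixed vector. I would split it as $2\nu_n\langle f(x_n)-f(z),x_{n+1}-z\rangle+2\nu_n\langle f(z)-z,x_{n+1}-z\rangle$, bound the first summand by $\nu_n\rho\bigl(\|x_n-z\|^2+\|x_{n+1}-z\|^2\bigr)$ via the contraction inequality and $2ab\le a^2+b^2$, and then absorb the resulting $\nu_n\rho\|x_{n+1}-z\|^2$ into the left-hand side. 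Dividing by $1-\nu_n\rho$ produces a recursion with effective coefficient $\gamma_n:=\nu_n(1-\rho)/(1-\nu_n\rho)$, which still satisfies $\gamma_n\to0$ and $\sum_n\gamma_n=\infty$ by (K1).

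Finally, setting $S_n=\|x_n-z\|^2$ and defining $\sigma_n$, a rescaled $\eta_n$ and $\pi_n=\gamma_n\sigma_n$ in analogy with Theorem 2.1 — where $\sigma_n$ now carries the anchor term $\tfrac{2}{1-\rho}\langle f(z)-z,x_{n+1}-z\rangle$ — I verify the hypotheses of Lemma \ref{lemma1}. The estimate $\pi_n\to0$ follows from (K2) and $\nu_n\to0$ exactly as before. For the implication, if $\eta_{n_k}\to0$ then $\|Tw_{n_k}-w_{n_k}\|\to0$, so by \eqref{eq2} a weakly convergent subsequence $x_{n_{k_j}}\rightharpoonup\bar x$ gives $w_{n_{k_j}}\rightharpoonup\bar x$ and, by Lemma \ref{lem:fix}, $\bar x\in\operatorname{Fix}(T)$; the projection characterization $z=P_{\operatorname{Fix}(T)}f(z)$ together with fact (3) then yields $\langle f(z)-z,\bar x-z\rangle\le0$. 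Combining this with the estimate $\|x_{n_k+1}-x_{n_k}\|\to0$ derived as in \eqref{eqd} gives $\limsup_k\langle f(z)-z,x_{n_k+1}-z\rangle\le0$, and since the inertial part of $\sigma_{n_k}$ vanishes by (K2), we conclude $\limsup_k\sigma_{n_k}\le0$. Lemma \ref{lemma1} then delivers $S_n\to0$, i.e. $x_n\to z$.
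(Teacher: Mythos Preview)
Your overall strategy matches the paper's: Theorem~2.2 is stated there without a separate proof, the preceding Remark~2.1 merely saying that one replaces $u$ by $f(x_n)$ in \eqref{MIMHA} and referring to \cite{suzuki2007moudafi} for details. Mirroring the proof of Theorem~2.1 with $z=P_{\operatorname{Fix}(T)}f(z)$ in place of $p$, establishing boundedness via the contraction inequality, and reusing the demiclosedness argument and Lemma~\ref{lemma1} is exactly the intended route, and those parts of your outline are correct.

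There is, however, a concrete slip in your handling of the cross term that makes the argument break down for $\rho\ge\tfrac12$. Bounding $2\nu_n\langle f(x_n)-f(z),x_{n+1}-z\rangle$ by $\nu_n\rho\bigl(\|x_n-z\|^2+\|x_{n+1}-z\|^2\bigr)$ and absorbing $\nu_n\rho\|x_{n+1}-z\|^2$ into the left-hand side leaves, after dividing by $1-\nu_n\rho$, the coefficient $\dfrac{1-\nu_n(1-\rho)}{1-\nu_n\rho}$ on $S_n$, so the effective step size is $\gamma_n=\dfrac{\nu_n(1-2\rho)}{1-\nu_n\rho}$, not $\dfrac{\nu_n(1-\rho)}{1-\nu_n\rho}$ as you wrote. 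For $\rho\ge\tfrac12$ this is nonpositive and Lemma~\ref{lemma1} cannot be invoked. The standard remedy---and the one behind the reference to \cite{suzuki2007moudafi}---is to split \emph{before} applying fact~(1): write
\[
x_{n+1}-z=(1-\nu_n)(y_n-z)+\nu_n\bigl(f(x_n)-f(z)\bigr)+\nu_n\bigl(f(z)-z\bigr),
\]
apply fact~(1) with the last summand playing the role of $y$, and then use convexity (fact~(2)) on the remaining two-term combination to obtain
\[
\bigl\|(1-\nu_n)(y_n-z)+\nu_n(f(x_n)-f(z))\bigr\|^2\le(1-\nu_n)\|y_n-z\|^2+\nu_n\rho^2\|x_n-z\|^2.
\]
This yields directly $S_{n+1}\le\bigl(1-\nu_n(1-\rho^2)\bigr)S_n+A_n-\eta_n+2\nu_n\langle f(z)-z,x_{n+1}-z\rangle$, with no absorption step; the resulting $\gamma_n=\nu_n(1-\rho^2)$ (or $\nu_n(1-\rho)$ after using $\rho^2\le\rho$) is positive for every $\rho\in[0,1)$, and then your $\sigma_n$ with anchor term $\tfrac{2}{1-\rho}\langle f(z)-z,x_{n+1}-z\rangle$ is exactly what appears. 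With this correction the remainder of your argument goes through unchanged.
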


\begin{remark}
	\begin{enumerate}[(i)]
		\item For special choice, the parameter $ \delta_{n} $ in the Algorithm \eqref{MIMHA} and the Algorithm \eqref{MIMVA} can be chosen the following:
		\begin{equation}\label{alpha}
			0 \leq \delta_{n} \leq \bar{\delta}_{n},\quad \bar{\delta}_{n}=\left\{\begin{array}{ll}{\min \left\{\frac{\xi_{n}}{\left\|x_{n}-x_{n-1}\right\|}, \frac{n-1}{n+\eta-1}\right\}}, & {\text { if } x_{n} \neq x_{n-1}}, \\ {\frac{n-1}{n+\eta-1}}, & {\text { otherwise}},\end{array}\right.
		\end{equation}
		for some $\eta \geq 3$ and $\left\{\xi_{n}\right\}$ is a positive sequence such that $\lim_{n \rightarrow \infty}\frac{\xi_{n}}{\nu_{n}}=0$. This idea derives from the recent inertial extrapolated step introduced in \cite{beck2009fast,attouch2016rate}.
		\item If $ \delta_{n}=0 $ for all $ n\ge 0 $, in the Algorithm \eqref{MIMHA} and  the Algorithm \eqref{MIMVA}, then we obtained the results of proposed by Kim and Xu \cite{kim2005strong} and Yao, Chen and Yao \cite{yao2008strong}, respectively.
	\end{enumerate}
\end{remark}

\section{Numerical experiments}\label{sec4}
In this section, we provide some numerical examples to illustrate the computational performance of the proposed algorithms. All the programs are performed in MATLAB2018a on a PC Desktop Intel(R) Core(TM) i5-8250U CPU @ 1.60GHz 1.800 GHz, RAM 8.00 GB.

\noindent \textbf{Example 3.1.}
	Let $H_{1}$ and $H_{2}$ be real Hilbert spaces and $T: H_{1} \rightarrow H_{2}$ a bounded linear operator. Let $ C $ and $ Q $ be nonempty closed and convex subsets of  $H_{1}$ and $H_{2}$, respectively. We consider the following  split feasibility problem (in short, SFP):
	\begin{equation}\label{P}
		\text { find } x^{*} \in C \text { such that } T x^{*} \in Q.
	\end{equation}
	For any $ f, g \in L^{2}([0,2 \pi]) $, we consider $H_{1}=H_{2}=L^{2}([0,2 \pi])$ with the inner product $ \langle f, g\rangle :=\int_{0}^{2 \pi} f(t) g(t) d t $ and the induced norm $ \|f\|_{2}:=\left(\int_{0}^{2 \pi}|f(t)|^{2} d t\right)^{\frac{1}{2}}$. Consider the half-space
	\[
	C=\left\{x \in L_{2}([0,2 \pi]) | \int_{0}^{2 \pi} x(t) d t \leq 1\right\},
	\text{ and }
	Q=\left\{x \in L_{2}([0,2 \pi])|\int_{0}^{2 \pi}\left| x(t)-\sin (t)\right|^{2} d t \leq 16\right\}.
	\]
	The set $ C $ and $ Q $ are nonempty closed and convex subsets of $L^{2}([0,2 \pi])$. Assume that $T: L^{2}([0,2 \pi]) \rightarrow L^{2}([0,2 \pi])$  is a bounded linear operator with its adjoint $ T $, it is defined by $(Tx)(t):=x(t)$. Then $\left(T^{*} x\right)(t)=x(t)$  and $\|T\|=1$. Therefore, \eqref{P} is actually a convex feasibility problem: $ \text {find } x^{*} \in C \cap Q $. Moreover, observe that the solution set of \eqref{P} is nonempty since  $x(t)=0$ is a solution.  For solving the \eqref{P}, Byrne \cite{byrne2002iterative} proposed the following algorithm:
	\[
	x_{n+1}=P_{C}\left(x_{n}-\lambda T^{*}\left(I-P_{Q}\right) T x_{n}\right),
	\]
	where $0<\lambda<2 L$ with Lipschitz constant $L=1 /\|T\|^{2}$. For the purpose of our numerical computation, we use the following formula for the projections onto $ C $ and  $ Q $, respectively, see \cite{bauschke2011convex}.
	\[
	P_{C}(x)=\left\{\begin{array}{ll}{\frac{1-a}{4 \pi^{2}} +x}, & {a>1}, \\ {x}, & {a \leq 1}.\end{array}\right.
	\text{ and }
	P_{Q}(x)=\left\{\begin{array}{ll}{\sin(\cdot) +\frac{4(x-\sin(\cdot) )}{\sqrt{b}}}, & {b>16}, \\ {x}, & {b \leq 16},\end{array}\right.
	\]
	where $a=\int_{0}^{2 \pi} x(t) d t$ and $b=\int_{0}^{2 \pi}|x(t)-\sin (t)|^{2} d t$. We consider different initial points $ x_{-1}= x_{0} $ and use  the stopping criterion
	\[
	E_{n}=\frac{1}{2}\left\|P_{C}x_{n}-x_{n}\right\|_{2}^{2}+\frac{1}{2}\left\|P_{Q}Tx_{n}-Tx_{n}\right\|_{2}^{2} <\epsilon.
	\]
	
	We use the modified  Mann Halpern algorithm (MMHA, i.e., MIMHA with $ \delta_{n}=0 $) \cite{kim2005strong}, the modified inertial Mann Halpern algorithm \eqref{MIMHA} (MIMHA), the modified  Mann viscosity algorithm (MMVA, i.e., MIMVA with $ \delta_{n}=0 $) \cite{yao2008strong} and the modified inertial Mann viscosity algorithm \eqref{MIMVA} (MIMVA) to solve Example 3.1. In all algorithms, set $ \epsilon =10^{-3}$, $ \psi_{n}=\frac{1}{100(n+1)^2} $, $ \nu_{n} = \frac{1}{n+1} $, $ \lambda = 0.25 $. In  MIMHA algorithm and  MIMVA algorithm, update $ \delta_{n} $ by \eqref{alpha} with $ \xi_{n} = \frac{10}{(n+1)^2} $ and $ \eta = 4 $. Set $ u=0.9x_{0} $ in the MIMHA algorithm and $ f(x)=0.9x_{n} $ in the MIMVA algorithm, respectively. Numerical results  are reported in Table \ref{tab1} and Fig. \ref{fig1}.	 In Table \ref{tab1}, ``Iter." and ``Time(s)" denote the number of iterations  and the cpu time in seconds, respectively.
	\begin{table}[htbp]
		\centering
		\caption{Computation results for Example 3.1}
			\begin{tabular}{lllllllllllll}
				\toprule
				&       & \multicolumn{2}{c}{MMHA} &       & \multicolumn{2}{c}{MIMHA} & & \multicolumn{2}{c}{MMVA} & & \multicolumn{2}{c}{MIMVA} \\
				\cmidrule{3-4}\cmidrule{6-7}\cmidrule{9-10} \cmidrule{12-13}   Cases & Initial points & Iter. & Time(s)   &       & Iter. & Time(s) &       & Iter. & Time(s) &       & Iter. & Time(s) \\
				\midrule
				I     & $  x_{0}=\frac{t^{2}}{10}$ &58       & 15.62      &       &56       &16.83   &       & 14      &3.64 & &8 &2.35 \\
				II    & $x_{0}=\frac{e^{\frac{t}{2}}}{3}$      &195       & 53.20     &       & 194      &60.40  &       &16       &4.34 & &10 &2.90 \\
				III   & $x_{0}=\frac{2^{t}}{16} $      &47       &12.85     &       &43       &13.03  &       &13       &3.36 & &8 &2.35 \\
				IV    &  $x_{0}=3\sin(2t)$     & 98      & 26.61      &       &95       &28.54  &       & 8      &2.08 & &5 &1.47 \\
				\bottomrule
			\end{tabular}%
		\label{tab1}%
	\end{table}%
	
	\begin{figure}[htbp]
		\centering  %图片全局居中
		\subfigure[Case I]{
			\label{Fig1sub1}
			\includegraphics[width=0.24\textwidth]{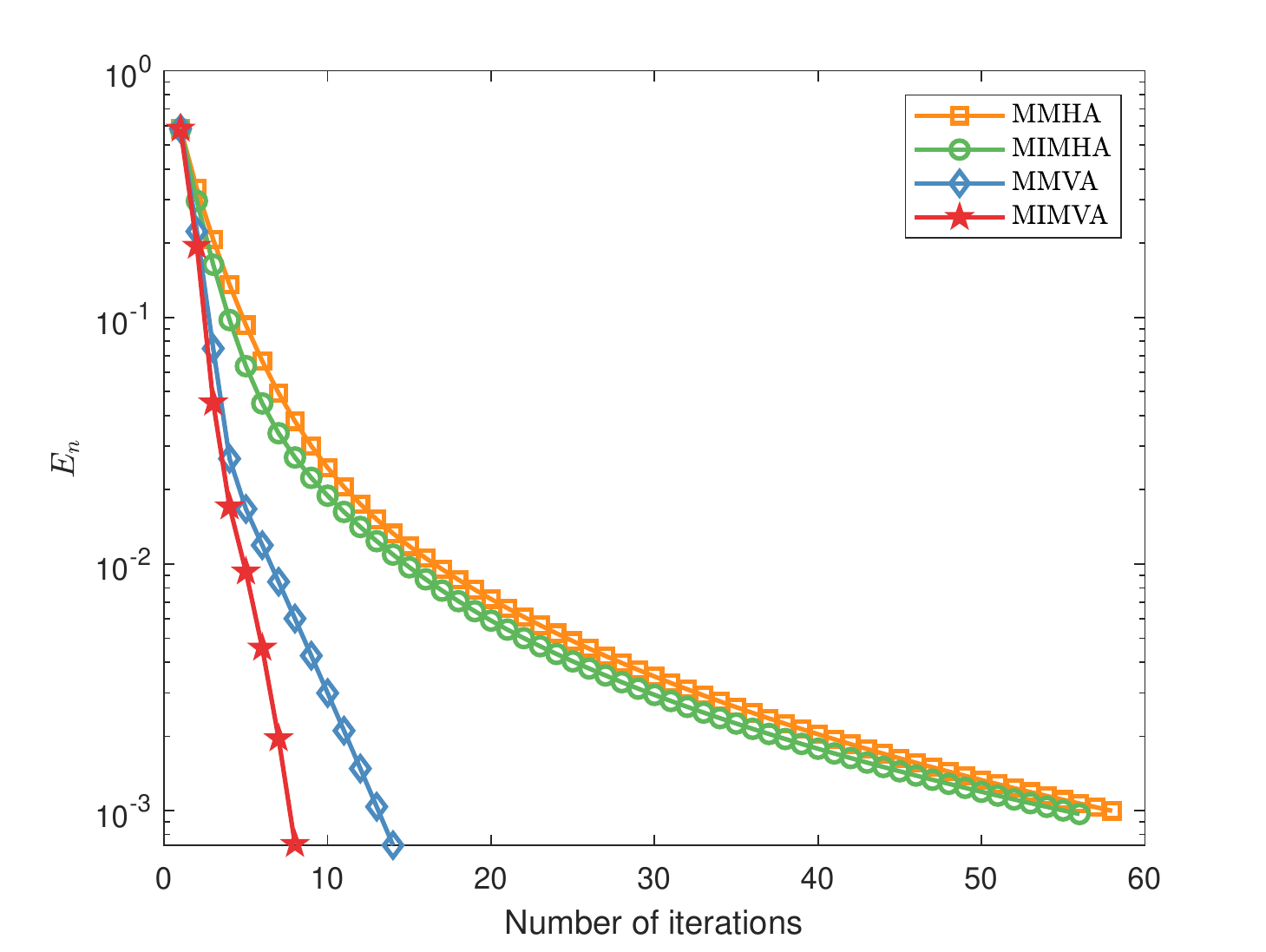}}
		\subfigure[Case II]{
			\label{Fig1sub2}
			\includegraphics[width=0.24\textwidth]{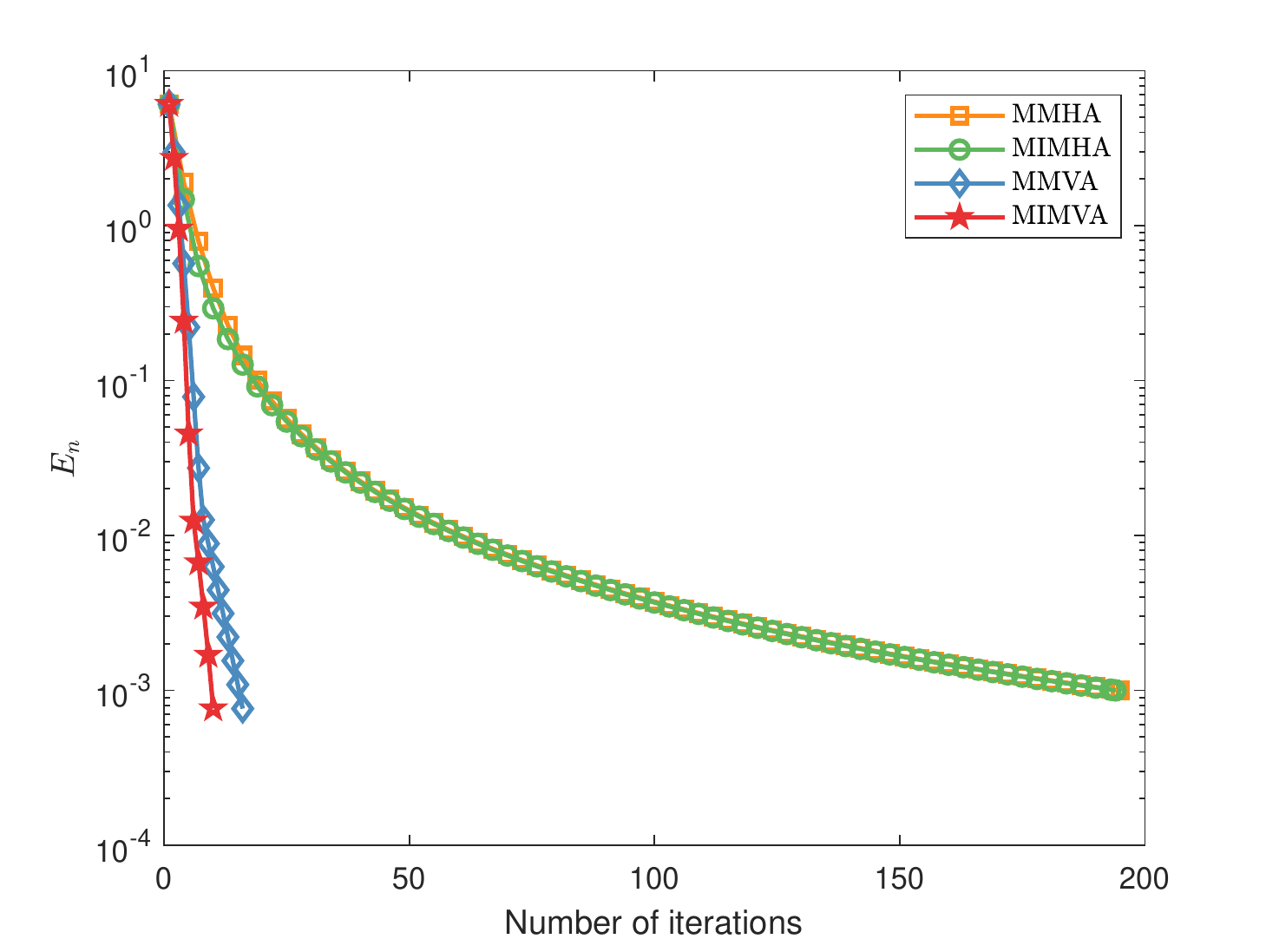}}
		\subfigure[Case III ]{
			\label{Fig1sub3}
			\includegraphics[width=0.24\textwidth]{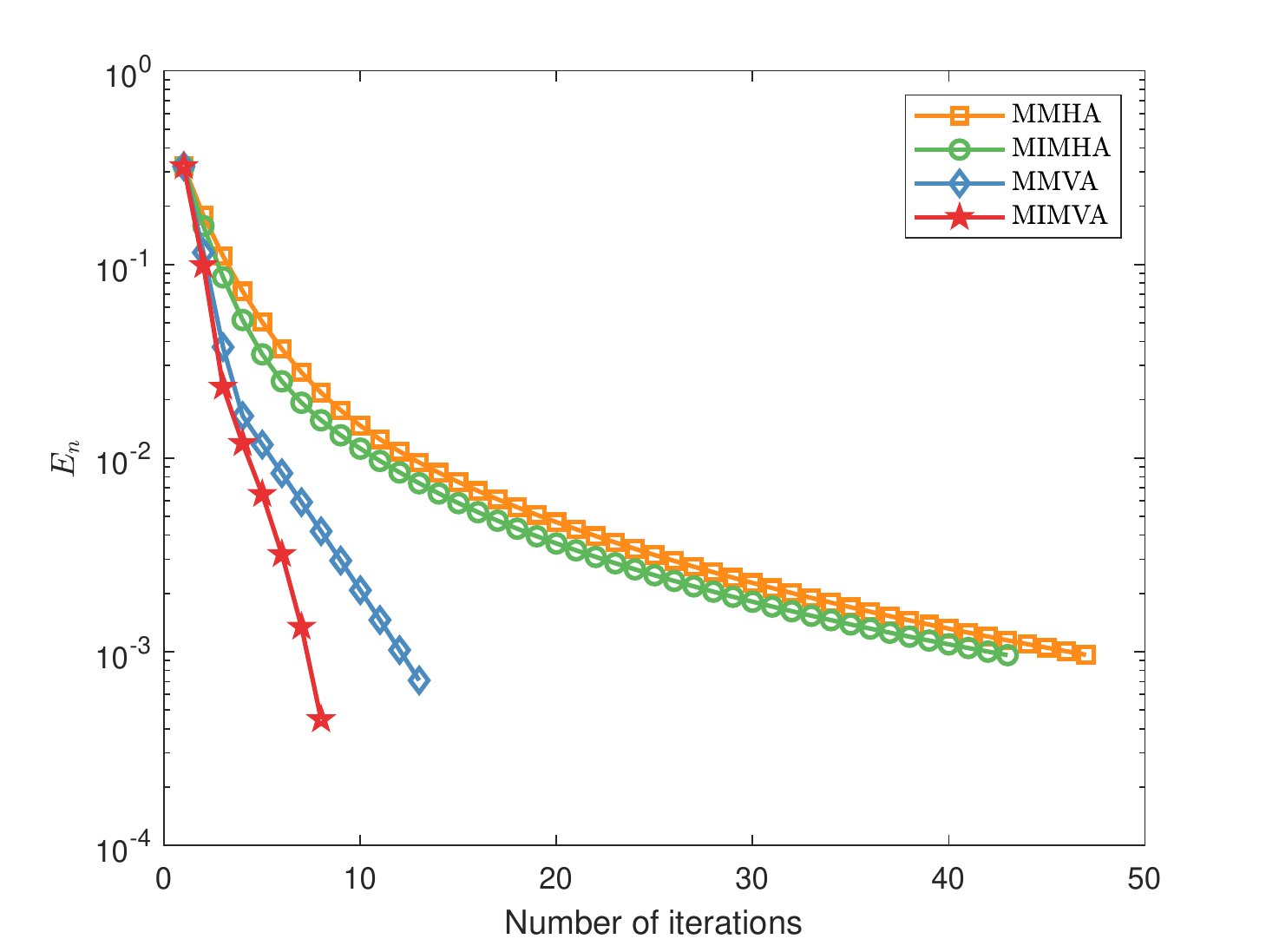}}
		\subfigure[Case IV ]{
			\label{Fig1sub4}
			\includegraphics[width=0.24\textwidth]{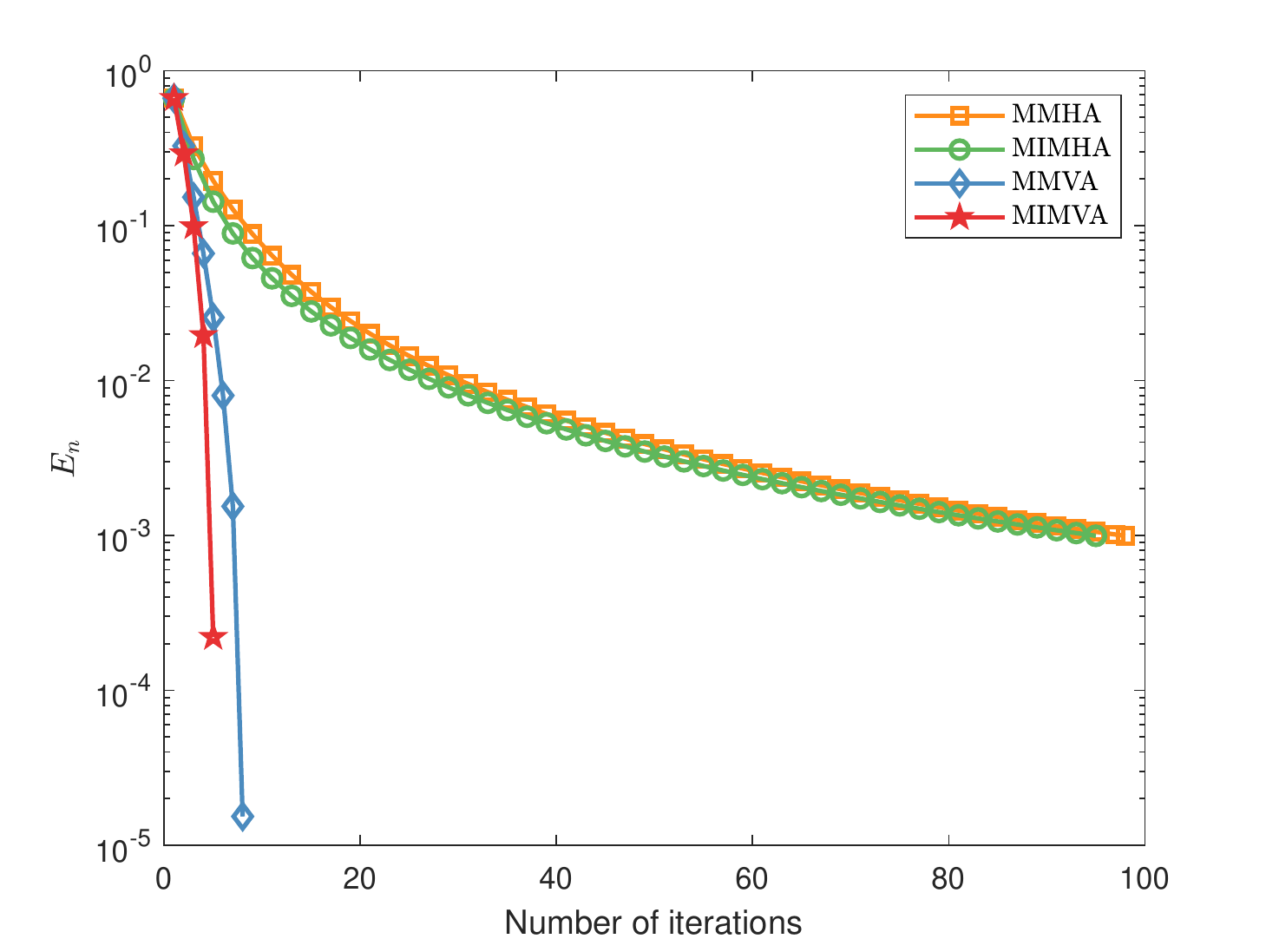}}
		\caption{Convergence behavior of iteration error $\{E_{n} \}$ for Example 3.1}
		\label{fig1}
	\end{figure}

\noindent \textbf{Example 3.2.}
	We consider the convex feasibility problem, for any nonempty closed convex set $C_{i} \subset {R}^{N}$ ($ i = 0,1, \ldots, m $), find
	$ x^{*} \in C:=\bigcap_{i=0}^{m} C_{i} $, where one assumes that $C \neq \emptyset$. Define a mapping $T: {R}^{N} \rightarrow {R}^{N}$ by $ T:=P_{0}\left(\frac{1}{m} \sum_{i=1}^{m} P_{i}\right) $, where $P_{i}=P_{C_{i}}$  stands for the  metric projection onto $ C_{i} $. Since $P_{i}$ is nonexpansive and hence the mapping $ T $ is also nonexpansive. Moreover, we find that
	$ \operatorname{ Fix }(T)=\operatorname{ Fix }\left(P_{0}\right) \bigcap_{i=1}^{m} \operatorname{ Fix }\left(P_{i}\right)=C_{0} \bigcap_{i=1}^{m} C_{i}=C $. In this experiment, we set $C_{i}$ as a closed ball with center $c_{i} \in {R}^{N}$ and radius $r_{i}>0$.  Thus $P_{i}$  can be computed with
	\[
	P_{i}(x):=\left\{\begin{array}{ll}{c_{i}+\frac{r_{i}}{\left\|c_{i}-x\right\|}\left(x-c_{i}\right)} & {\text { if }\left\|c_{i}-x\right\|>r_{i}}, \\ {x} & {\text { if }\left\|c_{i}-x\right\| \leq r_{i}}.\end{array}\right.
	\]
	Choose $r_{i}=1(i=0,1, \ldots, m)$, $c_{0}=[0,0,\ldots,0]$, $c_{1}=[1,0, \ldots, 0]$, and  $c_{2}=[-1,0, \ldots, 0]$. $c_{i} \in(-1 / \sqrt{N}, 1 / \sqrt{N})^{N}(i=3, \ldots, m)$ are randomly chosen. From the choice of  $c_{1}, c_{2}$ and $r_{1}, r_{2}$, we get that $\operatorname{ Fix }(T)=\{0\}$. Denote $E_{n}=\left\|x_{n}\right\|_{\infty}$ the iteration error of the algorithms.
	
	We use th CQ algorithm \eqref{cq} (CQ) \cite{nakajo2003strong}, the  inertial Mann algorithm \eqref{imann} (iMann) \cite{mainge2008convergence}, the modified  inertial Mann algorithm (MIMA) \cite{dong2018modified}, the modified  Mann viscosity algorithm (MMVA, i.e., MIMVA with $ \delta_{n}=0 $) \cite{yao2008strong} and the modified inertial Mann viscosity algorithm \eqref{MIMVA} (MIMVA) to solve Example 3.2. In all algorithms, set $ N=30 ,m=30$. Set $ \psi_{n}=\frac{1}{n+1} $ in the CQ algorithm and $ \delta_{n}=0.5 $, $ \psi_{n}=\frac{1}{n+1} $ in the iMann algorithm, respectively. Set $ \alpha_{n}=0.9 $, $ \lambda=1 $, $ \beta_{n}=\frac{1}{(n+1)^2} $, $ \mu=1 $ and $ \gamma_{n}=0.1 $ in the MIMA algorithm and $ \xi_{n}=\frac{10}{(n+1)^2} $, $ \eta=4 $, $ \psi_{n}=\frac{1}{100(n+1)^2} $, $ \nu_{n}=\frac{1}{n+1} $ and $ f(x)=0.1x_{n} $ in the MIMVA algorithm, respectively. Take maximum iteration of 1000 as a common stopping criterion. The initial values are randomly generated by the MATLAB function $ 10 \times$rand. Numerical results are reported in Fig. \ref{fig2}.
	\begin{figure}[htbp]
		\centering
		\subfigure[Convergence behavior of iteration error $\{E_{n} \}$]{
			\label{Fig2sub1}
			\includegraphics[width=0.4\textwidth]{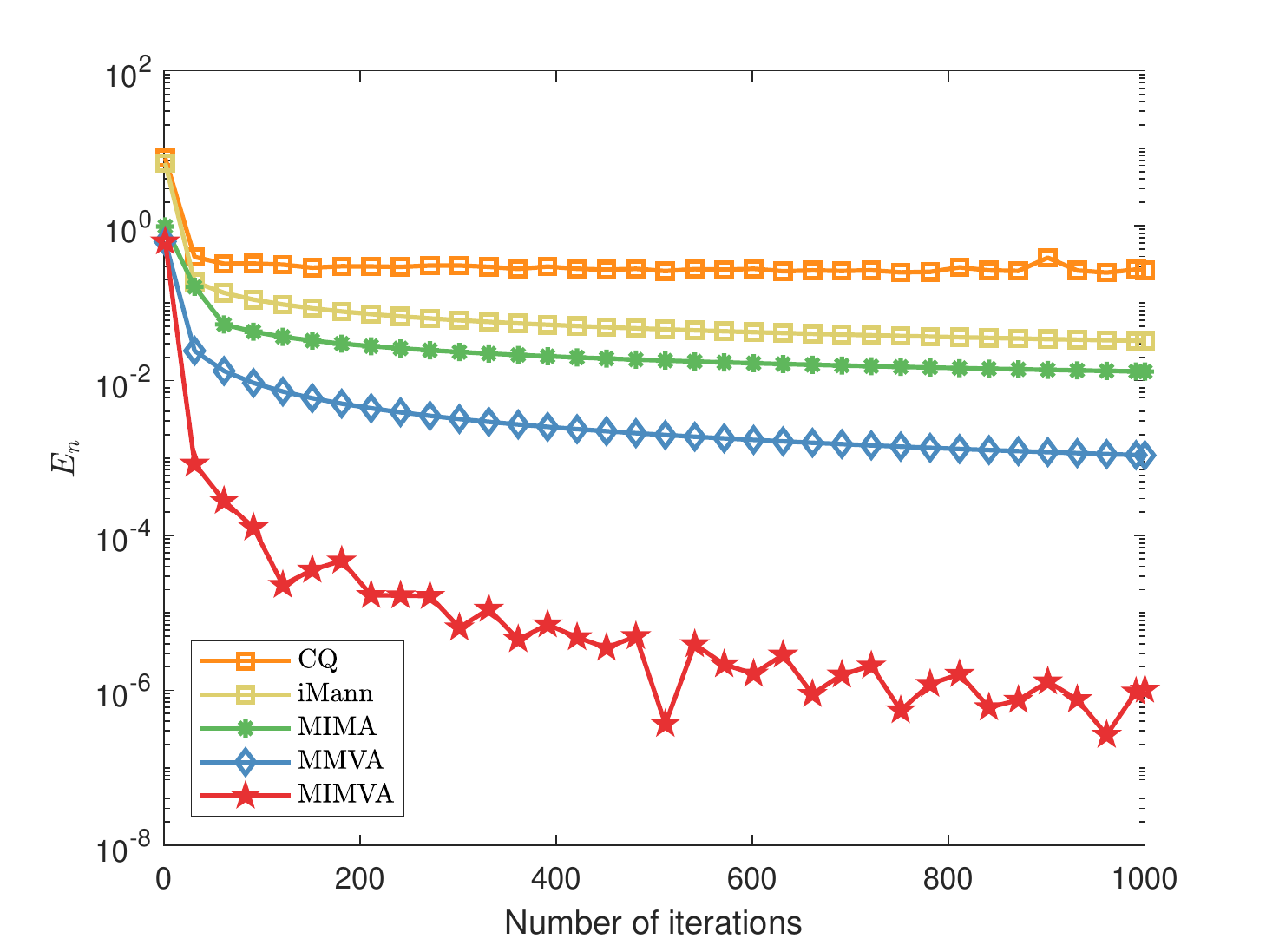}}
		\subfigure[MIMVA algorithm with different $ \eta $]{
			\label{Fig2sub2}
			\includegraphics[width=0.4\textwidth]{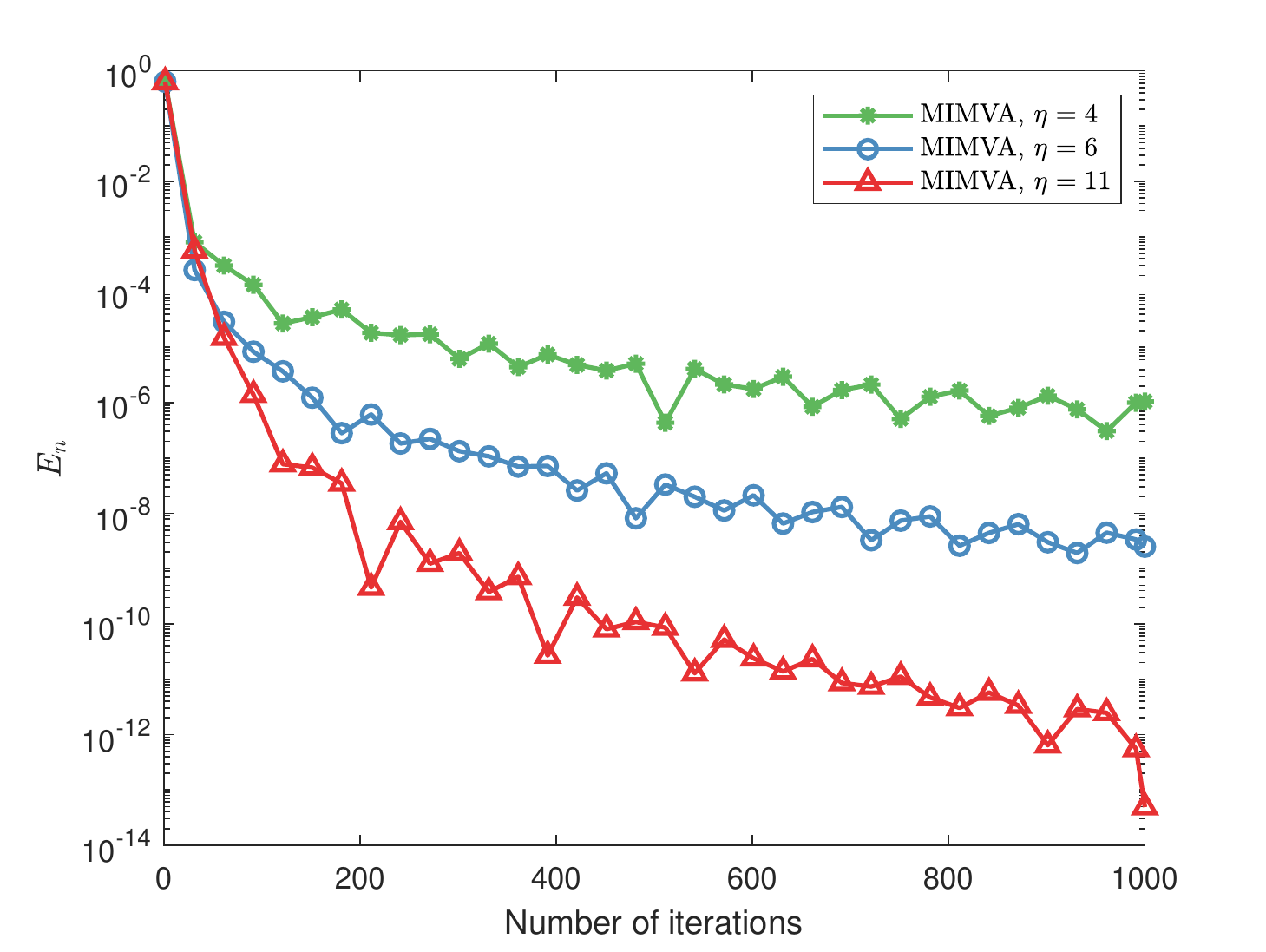}}
		\caption{Numerical results for  Example 3.2}
		\label{fig2}
	\end{figure}

\noindent \textbf{Example 3.3.}
	The Fermat–Weber problem is a famous model in location theory, which described as follows: find $ x\in R^{n} $ that solves
	\begin{equation}\label{fun}
		\min f(x):=\sum_{i=1}^{m} \omega_{i}\left\|x-a_{i}\right\|_{2},
	\end{equation}
	where $\omega_{i}>0$ are given weights and $a_{i} \in R^n$ are anchor points.  We know that the objective function $ f $ in \eqref{fun} is convex and coercive and hence the problem  has a nonempty solution set. It should be noted that $ f $ is not differentiable at the anchor points. The most famous method to solve the problem \eqref{fun} is Weiszfeld algorithm, see \cite{beck2015weiszfeld} for more discussion.   Weiszfeld proposed the following fixed point algorithm: $x_{n+1}=T\left(x_{n}\right), n\in N$. The mapping $T:{R}^{n} \backslash {A} \longmapsto {R}^{n}$ is defined by $ T(x):=\frac{1}{\sum_{i=1}^{m} \frac{\omega_{i}}{\left\|x-a_{i}\right\|}} \sum_{i=1}^{m} \frac{\omega_{i} a_{i}}{\left\|x-a_{i}\right\|} $, where ${A}=\left\{a_{1}, a_{2}, \ldots, a_{m}\right\}$. We consider a small example with $ n=3,m=8 $ anchor points,
	\[
	A=\left(\begin{array}{cccccccc}{0} & {10} & {0} & {10} & {0} & {10} & {0} & {10} \\ {0} & {0} & {10} & {10} & {0} & {0} & {10} & {10} \\ {0} & {0} & {0} & {0} & {10} & {10} & {10} & {10}\end{array}\right),
	\]
	and $\omega_{i}=1$ for all $i$. From the special selection of anchor points $ a_{i} (i=1,2,3,\cdots,8) $, we know that the optimal value of \eqref{fun} is $ x^* = (5, 5,5)^{\top} $. Fig. \ref{fig3} shows a schematic diagram of the anchor points and the optimal solution.
	\begin{figure}[h]
		\centering
		\includegraphics[width=0.4\linewidth]{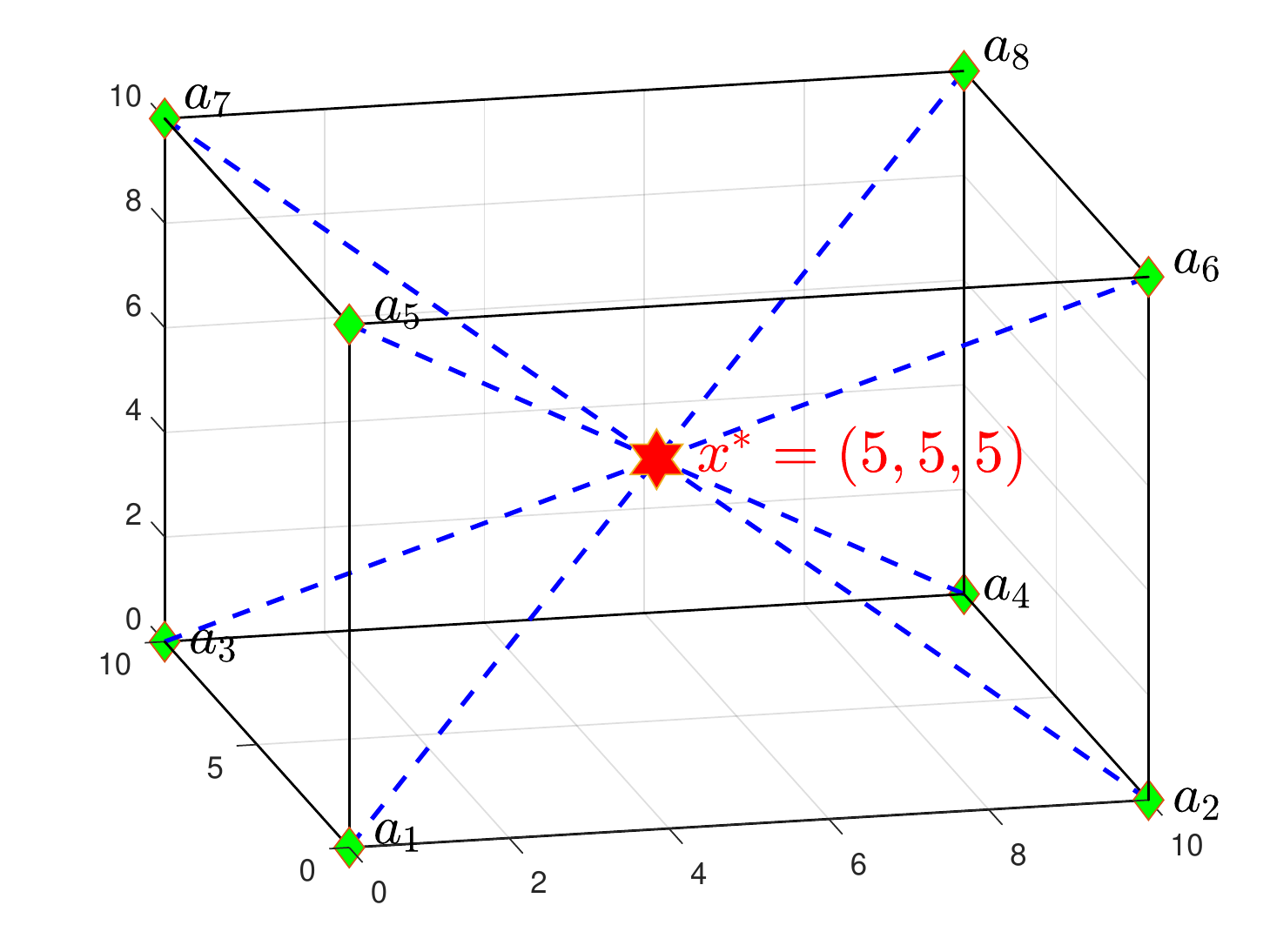}
		\caption{Schematic diagram of anchor points and optimal solution for Example 3.3}
		\label{fig3}
	\end{figure}
	
	We use the modified inertial Mann Halpern algorithm \eqref{MIMHA} (MIMHA) and the modified inertial Mann viscosity algorithm \eqref{MIMVA} (MIMVA) to solve Example 3.3. In MIMHA algorithm  and MIMVA algorithm, set $ \xi_{n}=\frac{10}{(n+1)^2} $, $ \eta=4 $, $ \psi_{n}=\frac{1}{100(n+1)^2} $, $ \nu_{n}=\frac{1}{n+1} $.  Set $ u=0.9x_{0} $ in the MIMHA algorithm and $ f(x)=0.9x_{n} $ in the MIMVA algorithm, respectively. Take $ E_{n}=\|x_{n}-x^*\|_{2} $ as iteration error of the algorithms and maximum iteration of 1000 as a common stopping criterion. The initial values are randomly generated by the MATLAB function $ 10 \times$rand. Numerical results are reported in Fig. \ref{fig4}.
	\begin{figure}[htbp]
		\centering  %图片全局居中
		\subfigure[Convergence process of iterative sequence $ \{x_{n}\} $]{
			\label{Fig7sub1}
			\includegraphics[width=0.4\textwidth]{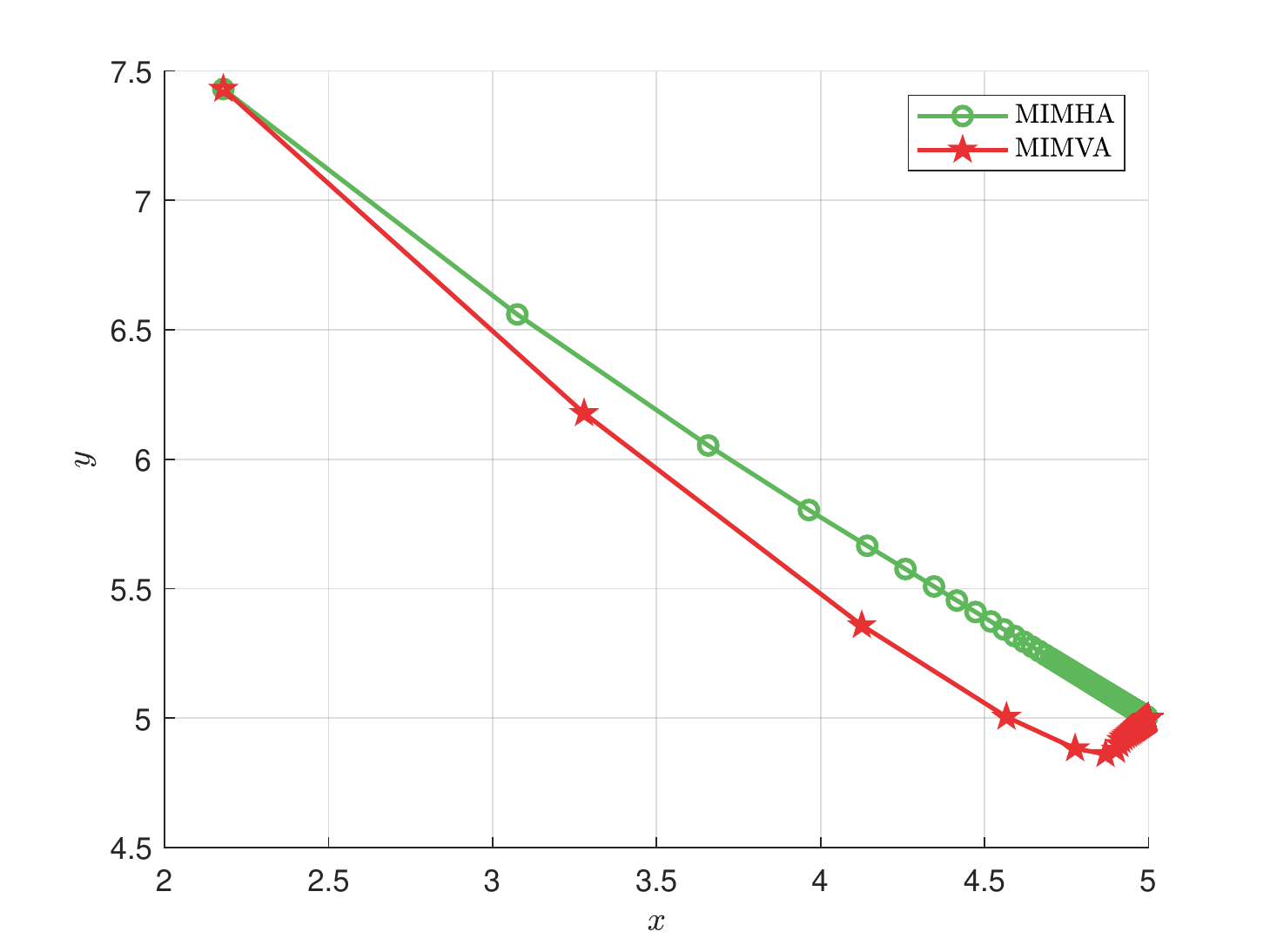}}
		\subfigure[Convergence behavior of iteration error $\{E_{n} \}$]{
			\label{Fig7sub2}
			\includegraphics[width=0.4\textwidth]{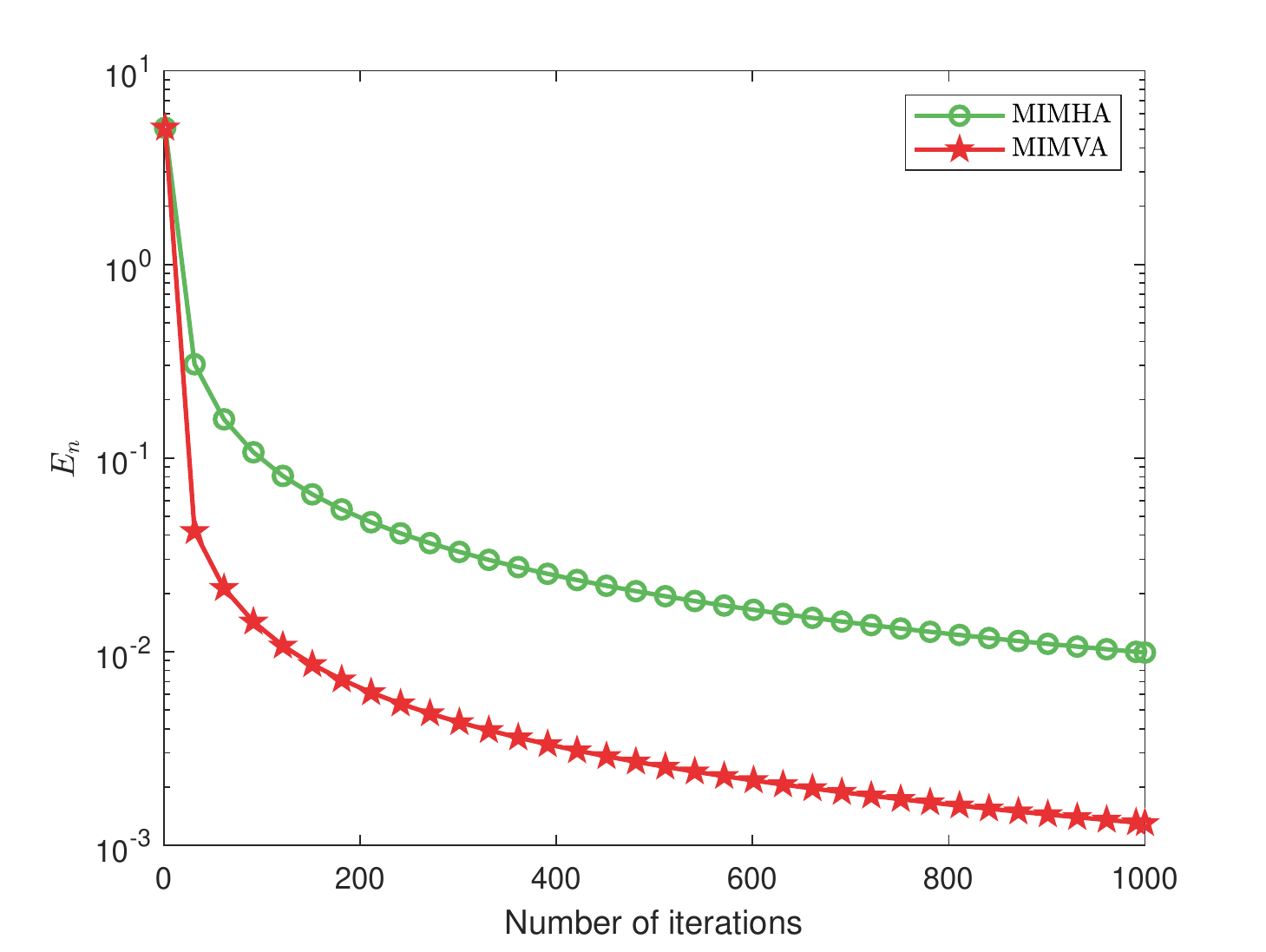}}
		\caption{Convergence behavior of $ \{x_{n}\} $ and $ \{E_{n}\} $ for Example 3.3}
		\label{fig4}
	\end{figure}

\begin{remark}
	\begin{enumerate}[(i)]
		\item From Example 3.1--Example 3.3, we observe that Algorithm \eqref{MIMVA} is efficient, easy to implement, and most importantly very fast. In addition, the inertial parameter \eqref{alpha} can significantly improve the convergence speed, see Fig. \ref{Fig2sub2}.
		\item The Algorithm \eqref{MIMVA} proposed in this paper can improve some known results in the field, see Fig. \ref{Fig2sub1}. It should be noted that
		the choice of initial values does not affect the calculation performance of the algorithm, see Table \ref{tab1}.
	\end{enumerate}
\end{remark}
\section{Conclusions}\label{sec5}
This paper discussed the modified inertial Mann Halpern and viscosity algorithms. Strong convergence results are obtained under some suitable conditions.  Finally, our proposed algorithms are applied to split feasibility problem, convex feasibility problem and location theory. Note that the algorithms and results presented in this paper can summarize and improve some known results in the area.

%\clearpage
\section*{References}
%RRRRRRRRRRRRRRRRRRRRRRRRRRRRRRRRRRRRRRRRRRRRRRRRRRRRRRRRRRRRRRRRRRRRRRRRRRRRRRRRRRRRR

\end{document}